\documentclass[11pt,a4paper,leqno]{amsart}
\usepackage[english]{babel}

\usepackage[latin1]{inputenc}
\usepackage[T1]{fontenc}
\usepackage{amsfonts}
\usepackage{amsmath}
\usepackage{amssymb}
\usepackage{eurosym}
\usepackage{mathrsfs}
\usepackage{palatino}
\usepackage{color}
\usepackage{esint}
\usepackage{url}
\usepackage{verbatim}
\allowdisplaybreaks[4]
\usepackage{enumerate}

\newcommand{\R}{\mathbb{R}}
\newcommand{\C}{\mathbb{C}}

\newcommand{\N}{\mathbb{N}}

\newcommand{\bla}{\big \langle}
\newcommand{\bra}{\big \rangle}

\numberwithin{equation}{section}

%% Operations of analysis %%%%%%%%%%%%%%%%%%%%%%%
\newcommand{\ud}[0]{\,\mathrm{d}}

\newcommand{\esssup}[0]{\operatornamewithlimits{ess\,sup}}

%% Norms %%%%%%%%%%%%%%%%%%%%%%%%%%%%%%%%%%%%%%%%

%% Topology and linear operators %%%%%%%%%%%%%%%

\newcommand{\BMO}[0]{\operatorname{BMO}}
\newcommand{\bmo}[0]{\operatorname{bmo}}

%{\mathcal{C}ar}

%% Numerical operations %%%%%%%%%%%%%%%%%%%%%%%%%%

\renewcommand{\Re}[0]{\operatorname{Re}}

%% Probability and R-boundedness %%%%%%%%%%%%%%%%%%

%% Dyadic cubes %%%%%%%%%%%%%%%%%%%%%%%%%%%%%%%%z

\newcommand{\ch}[0]{\operatorname{ch}}

\newcommand{\calD}[0]{\mathcal{D}}

\newcommand{\wt}[1]{{\widetilde{#1}}}

\swapnumbers
\theoremstyle{plain}
\newtheorem{thm}[equation]{Theorem}
\newtheorem{lem}[equation]{Lemma}
\newtheorem{prop}[equation]{Proposition}

\theoremstyle{definition}

\theoremstyle{remark}
\newtheorem{rem}[equation]{Remark}

\pagestyle{headings}

\addtolength{\hoffset}{-1.15cm}
\addtolength{\textwidth}{2.3cm}
\addtolength{\voffset}{0.45cm}
\addtolength{\textheight}{-0.9cm}

\setcounter{tocdepth}{1}

\author{Kangwei Li}
\address[K.L.]{BCAM (Basque Center for Applied Mathematics), Alameda de Mazarredo 14, 48009 Bilbao, Spain}
\email{kangwei.nku@gmail.com}

\author{Henri Martikainen}
\address[H.M.]{Department of Mathematics and Statistics, University of Helsinki, P.O.B. 68, FI-00014 University of Helsinki, Finland}
\email{henri.martikainen@helsinki.fi}

\author{Emil Vuorinen}
\address[E.V.]{Centre for Mathematical Sciences, University of Lund, P.O.B. 118, 22100 Lund, Sweden}
\email{j.e.vuorin@gmail.com}

\title{Bloom type upper bounds in the product BMO setting}

\makeatletter
\@namedef{subjclassname@2010}{%
  \textup{2010} Mathematics Subject Classification}
\makeatother

\subjclass[2010]{42B20}
\keywords{Iterated commutators, Bloom's inequality, product BMO, weighted BMO}

\thispagestyle{empty}
\begin{document}

\begin{abstract}
We prove some Bloom type estimates in the product BMO setting. More specifically,
for a bounded singular integral $T_n$ in $\R^n$ and a bounded singular integral $T_m$ in $\R^m$ we prove that
$$
\| [T_n^1, [b, T_m^2]] \|_{L^p(\mu) \to L^p(\lambda)} \lesssim_{[\mu]_{A_p}, [\lambda]_{A_p}} \|b\|_{\BMO_{\textup{prod}}(\nu)},
$$
where  $p \in (1,\infty)$, $\mu, \lambda \in A_p$ and $\nu := \mu^{1/p}\lambda^{-1/p}$ is the Bloom weight. Here $T_n^1$ is $T_n$ acting on the first variable,
$T_m^2$ is $T_m$ acting on the second variable, $A_p$ stands for the bi-parameter weights of $\R^n \times \R^m$ and
$\BMO_{\textup{prod}}(\nu)$ is a weighted product BMO space.
\end{abstract}

\maketitle

\section{Introduction}
Let $\mu$ and $\lambda$ be two general Radon measures in $\R^n$.
A two-weight problem asks for a characterisation of the boundedness $T \colon L^p(\mu) \to L^p(\lambda)$, where
$T$ is, for instance, a singular integral operator (SIO). 
Singular integral operators take the form
$$
  Tf(x)=\int_{\R^n}K(x,y)f(y)\ud y,
$$
where different assumptions on the {\em kernel} $K$ lead to important classes of linear transformations arising across pure and applied analysis.
There exists a two-weight characterisation for the Hilbert transform $T =H$, where $K(x,y) = 1/(x-y)$,
 by Lacey \cite{La1} and Lacey, Sawyer, Uriarte-Tuero and Shen \cite{LSUS} (see also Hyt\"onen \cite{Hy3}).

In a Bloom type variant of the two-weight question we require that $\mu$ and $\lambda$ are Muckenhoupt $A_p$ weights and that the problem
involves a function $b$ that can be taken to be in some appropriate weighted $\BMO$ space $\BMO(\nu)$ formed using the Bloom weight $\nu := \mu^{1/p}\lambda^{-1/p} \in A_2$.
The presence of the function $b$ lands us naturally to the commutator setting.
Coifman--Rochberg--Weiss \cite{CRW} showed that the commutator $[b,T]f := bTf - T(bf)$ satisfies
$$
\|b\|_{\BMO} \lesssim \|[b,T]\|_{L^p \to L^p} \lesssim \|b\|_{\BMO}, \qquad p \in (1,\infty),
$$
for a wide class of singular integrals $T$. These estimates are called the commutator lower bound and the commutator upper bound, respectively. Both estimates are non-trivial and proved quite differently. The upper bound should hold for all bounded SIOs, while the lower bound requires some suitable non-degenaracy.
Given some operator $A^b$, the definition of which depends naturally on some function $b$,
the Bloom type questions concerns the validity of the estimate
$$
\|A^b\|_{L^p(\mu) \to L^p(\lambda)} \lesssim_{[\mu]_{A_p}, [\lambda]_{A_p}} \|b\|_{\BMO(\nu)}.
$$
When studied in the natural commutator setting we may also, for suitable singular integrals $T$, hope to prove the appropriate lower bound, such as
$$
\|b\|_{\BMO(\nu)} \lesssim_{[\mu]_{A_p}, [\lambda]_{A_p}} \|[b,T]\|_{L^p(\mu) \to L^p(\lambda)}.
$$
In the Hilbert transform case $T = H$ Bloom \cite{Bl} proved such a two-sided estimate.

 A renewed interest on the Bloom type estimates started from the recent works of
Holmes--Lacey--Wick \cite{HLW, HLW2}. They proved Bloom's upper bound for general bounded SIOs in $\R^n$ using modern proof techniques, and considered
the lower bound in the Riesz case.
Lerner--Ombrosi--Rivera-R\'ios \cite{LOR1} further refined these results
using sparse domination. An iterated commutator of the form $[b, [b,T]]$ is studied by Holmes--Wick \cite{HW}, when $b \in \BMO \cap \BMO(\nu)$. It was also possible to prove this iterated case from the first order case via the so-called Cauchy integral trick of Coifman--Rochberg--Weiss \cite{CRW}, see
Hyt\"onen \cite{Hy4}. However, this works precisely because it is also assumed that $b \in \BMO$. 
It turns out that  $\BMO \cap \BMO(\nu)$ is not optimal.  A fundamentally improved iterated case is by Lerner--Ombrosi--Rivera-R\'ios \cite{LOR2}. There $b \in \BMO(\nu^{1/2}) \supsetneq \BMO \cap \BMO(\nu)$, and this is a characterisation as lower bounds also hold by \cite{LOR2}. In the recent paper \cite{Hy5} by Hyt\"onen lower bounds with weak non-degeneracy assumptions on
$T$ (or its kernel $K$) are shown. On the other hand, multilinear Bloom type inequalities were initiated by Kunwar--Ou \cite{KO}. 

We are interested in Bloom type inequalities in the bi-parameter setting.
Classical {\em one-parameter} kernels are ``singular'' (involve ``division by zero'') exactly when $x=y$. In contrast, the {\em multi-parameter} theory is concerned with kernels whose singularity is spread over the union of all hyperplanes of the form $x_i=y_i$, where $x,y\in\R^d$ are written as $x=(x_i)_{i=1}^t\in\R^{d_1}\times\cdots\times\R^{d_t}$ for a fixed partition $d=d_1+\ldots+d_t$. The bi-parameter case $d=d_1+d_2=n+m$ is already representative of many of the challenges arising in this context. The prototype example is $1/[(x_1-y_1)(x_2-y_2)]$, the product of Hilbert kernels in both coordinate directions of $\R^2$, but general two-parameter kernels are neither assumed to be of the product nor of the convolution form. 
 A general bi-parameter SIO is a linear operator with various kernel representations depending
whether we have separation in both $\R^n$ and $\R^m$ (a full kernel representation), or just in $\R^n$ or $\R^m$ (a partial kernel representation).
See the bi-parameter representation theorem \cite{Ma1} by one of us. This is the modern viewpoint but deals with the same class of SIOs as originally introduced by Journ\'e \cite{Jo2}  (see Grau de la Herr\'an \cite{Grau}).

In general, there is a vast difference between the techniques that are required in the bi-parameter (or more generally multi-parameter) setting and in the one-parameter theory. For example, there is only one sparse domination paper in this setting, namely Barron--Pipher \cite{BP}, but the sparse domination is quite a bit more restricted
than in the one-parameter setting.

Bloom theory in the bi-parameter setting is very recent. Bloom type upper bounds for commutators of the form $[b,T]$ are valid for bi-parameter Calder\'on--Zygmund operators $T$ and they are established using the bi-parameter dyadic representation theorem \cite{Ma1} and carefully estimating the resulting commutators of various dyadic model operators (DMOs). Here we define a bi-parameter Calder\'on--Zygmund operator (CZO) to be a bi-parameter SIO such that $T1, T^*1, T_1(1), T_1^*(1) \in \BMO_{\textup{prod}}$, and certain additional weak testing conditions hold. Here $T_1$ is a \emph{partial adjoint} of $T$ in the first slot, $\langle T_1(f_1 \otimes f_2), g_1 \otimes g_2 \rangle = \langle T(g_1 \otimes f_2), f_1 \otimes g_2 \rangle$, and $\BMO_{\textup{prod}}$ is the product BMO of Chang and Fefferman \cite{CF1, CF2}. Holmes--Petermichl--Wick \cite{HPW} proved the first bi-parameter Bloom type estimate
$$
\|[b,T]\|_{L^p(\mu) \to L^p(\lambda)} \lesssim_{[\mu]_{A_p}, [\lambda]_{A_p}} \|b\|_{\bmo(\nu)}.
$$
Here $A_p$ stands for bi-parameter weights (replace cubes by rectangles in the usual definition) and $\bmo(\nu)$ is the weighted little $\BMO$ space defined using the norm
$$
\|b\|_{\bmo(\nu)} := \sup_{R} \frac{1}{\nu(R)} \int_R |b - \langle b \rangle_R|, \textup{ where } \langle b \rangle_R = \frac{1}{|R|} \int_R b, \,\, \nu(R) = \int_R \nu, 
$$
and the supremum is over all rectangles $R = I \times J \subset \R^n \times \R^m$. We refer to these types of commutators as little $\BMO$ commutators. 

In the recent paper \cite{LMV2} we reproved the result of \cite{HPW} in an efficient way based on improved commutator decompositions from
our bilinear bi-parameter theory \cite{LMV1}. The clear structure of our proof allowed us to also handle the iterated little BMO commutator case and to prove the upper bound
\begin{equation}\label{eq:LMVIterated}
\| [b_k,\cdots[b_2, [b_1, T]]\cdots]\|_{L^p(\mu) \to L^p(\lambda)} \lesssim_{[\mu]_{A_p}, [\lambda]_{A_p}} \prod_{i=1}^k\|b_i\|_{\bmo(\nu^{\theta_i})}, \qquad \sum_{i=1}^k\theta_i=1.
\end{equation}

There is also a different type of commutator, which is equally fundamental in the bi-parameter setting, but
which we have not yet discussed. \emph{In this paper we are interested in Bloom type
question for this commutator} -- the main point is that now the right space is not the little $\bmo$ but the much harder product $\BMO$.
If $T_n$ and $T_m$ are one-parameter CZOs (bounded SIOs) in $\R^n$ and $\R^m$, respectively, then for the $L^p \to L^p$ norm of the commutator
$$
[T_n^1, [b, T_m^2]]f = T_n^1(b T_m^2 f) - T_n^1T_m^2(bf) - b T_m^2 T_n^1 f + T_m^2(bT_n^1f),
$$
where $T_n^1 f(x) = T_n(f(\cdot, x_2))(x_1)$,
the right object is $b \in \BMO_{\textup{prod}}(\R^{n+m})$. We refer to these commutators as product $\BMO$ type commutators.
Classical references for commutators of both type in the Hilbert transform case include Ferguson--Sadosky \cite{FeSa} and the groundbreaking paper Ferguson--Lacey \cite{FL}.
In particular, \cite{FL} contains the extremely difficult lower bound estimate
$$
\|b\|_{\BMO_{\textup{prod}}(\R^{2})} \lesssim \| [H^1, [b, H^2]]\|_{L^2 \to L^2}.
$$
See also Lacey--Petermichl--Pipher--Wick \cite{LPPW, LPPW2, LPPW3} for the higher dimensional Riesz setting and applications to div-curl lemmas.
The corresponding upper bound is not easy either -- even for special operators such as the Riesz transforms. However, in \cite{DaO} Dalenc and Ou proved that for all CZOs
$$
\| [T_n^1, [b, T_m^2]] \|_{L^p \to L^p} \lesssim \|b\|_{\BMO_{\textup{prod}}}
$$
using the modern approach via the one-parameter representation theorem of Hyt\"onen \cite{Hy}. 

However, no Bloom type estimates have been considered. In this paper we prove the Dalenc--Ou \cite{DaO} type bound in the Bloom setting -- i.e., we prove Bloom type upper bounds
for product BMO commutators. The following is our main result.
\begin{thm}
Let $T_n$ and $T_m$ be one-parameter CZOs in $\R^n$ and $\R^m$, respectively, and let $p \in (1,\infty)$,
$\mu, \lambda \in A_p$ and $\nu := \mu^{1/p}\lambda^{-1/p}$. We have the quantitative estimate
$$
\| [T_n^1, [b, T_m^2]] \|_{L^p(\mu) \to L^p(\lambda)} \lesssim_{[\mu]_{A_p}, [\lambda]_{A_p}} \|b\|_{\BMO_{\textup{prod}}(\nu)}.
$$
\end{thm}
\noindent Here $\BMO_{\textup{prod}}(\nu)$ is a weighted product BMO space as defined at least in \cite{HPW, LMV2}.
We note that $\bmo(\nu) \subset \BMO_{\textup{prod}}(\nu)$ (a proof can be found at least in \cite{LMV2}).

This product BMO Bloom estimate is proved in Section \ref{sec:ProofBloom}. While we were inspired by our previous paper
\cite{LMV2} dealing with the new approach to little BMO commutators, a quite different take on things is required by the current product BMO setting. The proof idea is outlined
in the beginning of Section \ref{sec:ProofBloom}. 

It is probably
possible to add more singular integrals to this Bloom bound and/or consider multi-parameter CZOs here, but
we content with the most fundamental case here. However, we still mention the following. If we consider multi-parameter singular integrals in the product $\BMO$ type commutators, then some kind of little product $\BMO$
assumptions concerning $b$ are the right thing.
Suppose e.g. that $T_1$ and $T_2$ are bi-parameter CZOs in $\R^{n_1} \times \R^{n_2}$ and $\R^{n_3} \times \R^{n_4}$, respectively.
Then according to Ou--Petermich--Strouse \cite{OPS} and Holmes--Petermichl--Wick \cite{HPW} we have
\begin{align*}
\|[T_1, [b, T_2]]f&\|_{L^2(\prod_{i=1}^4 \R^{n_i})} \lesssim \max\big( \sup_{x_2, x_4} \|b(\cdot, x_2, \cdot, x_4)\|_{\BMO_{\operatorname{prod}}},
 \sup_{x_2, x_3} \|b(\cdot, x_2, x_3, \cdot)\|_{\BMO_{\operatorname{prod}}}, \\
 & \sup_{x_1, x_4} \|b(x_1, \cdot, \cdot, x_4)\|_{\BMO_{\operatorname{prod}}},
  \sup_{x_1, x_3} \|b(x_1, \cdot, x_3, \cdot)\|_{\BMO_{\operatorname{prod}}} \big) \|f\|_{L^2(\prod_{i=1}^4 \R^{n_i})},
\end{align*}
where instead of $T_1$ we could also similarly as above write $T_1^{1,2}$ highlighting the fact that here it acts on the first two variables. 
In general, the field of multi-parameter commutator estimates is again very active -- we mention e.g. that recently the commutators of multi-parameter flag singular integrals were investigated by Duong--Li--Ou--Pipher--Wick \cite{DLOPW}.

The product BMO Bloom type upper bound is the main contribution of this paper. However, we also complement our previous paper \cite{LMV2} and \eqref{eq:LMVIterated} by giving
an easy little BMO iterated commutator lower bound proof using the so-called median method, previously used in the one-parameter setting in \cite{LOR2, Hy5} (see also \cite{LMV1}). In \cite{LMV2} we only recorded the following remark regarding the Estimate \eqref{eq:LMVIterated}.
Choosing $b_1 = \cdots = b_k = b$ and $\theta_1 = \cdots = \theta_k = 1/k$ in \eqref{eq:LMVIterated}
we get a bi-parameter analog of \cite{LOR2}, while choosing $\theta_1 = 1$ (and the rest zero) we get analogs of \cite{HW, Hy4}. However, the first is the better choice as $\bmo(\nu^{1/k}) \supset \bmo \cap \bmo(\nu)$. Indeed, similarly as in the one-parameter case
\cite{LOR2}, this is seen by using that $\langle \nu \rangle_{R}^{\theta} \lesssim_{[\nu]_{A_2}} \langle \nu^{\theta} \rangle_{R}$ for all $\theta \in (0,1)$ and rectangles $R$ (this estimate follows from \cite[Theorem 2.1]{CN} by iteration). In this paper we actually prove the lower bound showing the optimality of $\bmo(\nu^{1/k})$. The details are given quickly in Section \ref{sec:LowerBounds}.

\subsection*{Acknowledgements}
K. Li was supported by Juan de la Cierva - Formaci\'on 2015 FJCI-2015-24547, by the Basque Government through the BERC
2018-2021 program and by Spanish Ministry of Economy and Competitiveness
MINECO through BCAM Severo Ochoa excellence accreditation SEV-2017-0718
and through project MTM2017-82160-C2-1-P funded by (AEI/FEDER, UE) and
acronym ``HAQMEC''.

H. Martikainen was supported by the Academy of Finland through the grants 294840 and 306901, and by the three-year research grant 75160010 of the University of Helsinki.
He is a member of the Finnish Centre of Excellence in Analysis and Dynamics Research.

E. Vuorinen was supported by the Academy of Finland through the grant 306901, by the Finnish Centre of Excellence in Analysis and Dynamics Research, and by
Jenny and Antti Wihuri Foundation.

\section{Definitions and preliminaries}
\subsection{Basic notation}
We denote $A \lesssim B$ if $A \le CB$ for some constant $C$ that can depend on the dimension of the underlying spaces, on integration exponents, and on various other constants appearing in the assumptions. We denote $A \sim B$ if $B \lesssim A \lesssim B$.

We work in the bi-parameter setting in the product space $\R^{n+m} = \R^n \times \R^m$.
In such a context $x = (x_1, x_2)$ with $x_1 \in \R^n$ and $x_2 \in \R^m$.
We often take integral pairings with respect to one of the two variables only:
If $f \colon \R^{n+m} \to \C$ and $h \colon \R^n \to \C$, then $\langle f, h \rangle_1 \colon \R^{m} \to \C$ is defined by
$
\langle f, h \rangle_1(x_2) = \int_{\R^n} f(y_1, x_2)h(y_1)\ud y_1.
$

\subsection{Dyadic notation, Haar functions and martingale differences}
We denote a dyadic grid in $\R^n$ by $\calD^n$ and a dyadic grid in $\R^m$ by $\calD^m$. If $I \in \calD^n$, then $I^{(k)}$ denotes the unique dyadic cube $S \in \calD^n$ so that $I \subset S$ and $\ell(S) = 2^k\ell(I)$. Here $\ell(I)$ stands for side length. Also, $\text{ch}(I)$ denotes the dyadic children of $I$, i.e., $I' \in \ch(I)$ if $I' \in \calD^n$, $I' \subset I$ and $\ell(I') = \ell(I)/2$. We often write $\calD = \calD^n \times \calD^m$.

When $I \in \calD^n$ we denote by $h_I$ a cancellative $L^2$ normalised Haar function. This means the following.
Writing $I = I_1 \times \cdots \times I_n$ we can define the Haar function $h_I^{\eta}$, $\eta = (\eta_1, \ldots, \eta_n) \in \{0,1\}^n$, by setting
\begin{displaymath}
h_I^{\eta} = h_{I_1}^{\eta_1} \otimes \cdots \otimes h_{I_n}^{\eta_n}, 
\end{displaymath}
where $h_{I_i}^0 = |I_i|^{-1/2}1_{I_i}$ and $h_{I_i}^1 = |I_i|^{-1/2}(1_{I_{i, l}} - 1_{I_{i, r}})$ for every $i = 1, \ldots, n$. Here $I_{i,l}$ and $I_{i,r}$ are the left and right
halves of the interval $I_i$ respectively. The reader should carefully notice that $h_I^0$ is the non-cancellative Haar function for us and that in some other papers a different convention is used. If $\eta \in \{0,1\}^n \setminus \{0\}$ the Haar function is cancellative: $\int h_I^{\eta} = 0$. We usually suppress the presence of $\eta$
and simply write $h_I$ for some $h_I^{\eta}$, $\eta \in \{0,1\}^n \setminus \{0\}$. Then $h_Ih_I$ can stand for $h_I^{\eta_1} h_I^{\eta_2}$, but we always treat
such a product as a non-cancellative function i.e. use only its size.

For $I \in \calD^n$ and a locally integrable function $f\colon \R^n \to \C$, we define the martingale difference
$$
\Delta_I f = \sum_{I' \in \textup{ch}(I)} \big[ \bla f \bra_{I'} -  \bla f \bra_{I} \big] 1_{I'}.
$$
Here $\bla f \bra_I = \frac{1}{|I|} \int_I f$. We also write $E_I f = \bla f \bra_I 1_I$.
Now, we have $\Delta_I f = \sum_{\eta \ne 0} \langle f, h_{I}^{\eta}\rangle h_{I}^{\eta}$, or suppressing the $\eta$ summation, $\Delta_I f = \langle f, h_I \rangle h_I$, where $\langle f, h_I \rangle = \int f h_I$. A martingale block is defined by
$$
\Delta_{K,i} f = \mathop{\sum_{I \in \calD^n}}_{I^{(i)} = K} \Delta_I f, \qquad K \in \calD^n, i \in \N.
$$

Next, we define bi-parameter martingale differences. Let $f \colon \R^n \times \R^m \to \C$ be locally integrable.
Let $I \in \calD^n$ and $J \in \calD^m$. We define the martingale difference
$$
\Delta_I^1 f \colon \R^{n+m} \to \C, \Delta_I^1 f(x) := \Delta_I (f(\cdot, x_2))(x_1).
$$
Define $\Delta_J^2f$ analogously, and also define $E_I^1$ and $E_J^2$ similarly.
We set
$$
\Delta_{I \times J} f \colon \R^{n+m} \to \C, \Delta_{I \times J} f(x) = \Delta_I^1(\Delta_J^2 f)(x) = \Delta_J^2 ( \Delta_I^1 f)(x).
$$
Notice that $\Delta^1_I f = h_I \otimes \langle f , h_I \rangle_1$, $\Delta^2_J f = \langle f, h_J \rangle_2 \otimes h_J$ and
$ \Delta_{I \times J} f = \langle f, h_I \otimes h_J\rangle h_I \otimes h_J$ (suppressing the finite $\eta$ summations).
Martingale blocks are defined in the natural way
$$
\Delta_{K \times V}^{i, j} f  =  \sum_{I\colon I^{(i)} = K} \sum_{J\colon J^{(j)} = V} \Delta_{I \times J} f = \Delta_{K,i}^1( \Delta_{V,j}^2 f) = \Delta_{V,j}^2 ( \Delta_{K,i}^1 f).
$$

\subsection{Weights}
A weight $w(x_1, x_2)$ (i.e. a locally integrable a.e. positive function) belongs to bi-parameter $A_p(\R^n \times \R^m)$, $1 < p < \infty$, if
$$
[w]_{A_p(\R^n \times \R^m)} := \sup_{R} \bla w \bra_R \bla w^{1-p'} \bra_R^{p-1} < \infty,
$$
where the supremum is taken over $R = I \times J$, where $I \subset \R^n$ and $J \subset \R^m$ are cubes
with sides parallel to the axes (we simply call such $R$ rectangles).
We have
$$
[w]_{A_p(\R^n\times \R^m)} < \infty \textup { iff } \max\big( \esssup_{x_1 \in \R^n} \,[w(x_1, \cdot)]_{A_p(\R^m)}, \esssup_{x_2 \in \R^m}\, [w(\cdot, x_2)]_{A_p(\R^n)} \big) < \infty,
$$
and that $\max\big( \esssup_{x_1 \in \R^n} \,[w(x_1, \cdot)]_{A_p(\R^m)}, \esssup_{x_2 \in \R^m}\, [w(\cdot, x_2)]_{A_p(\R^n)} \big) \le [w]_{A_p(\R^n\times \R^m)}$, while
the constant $[w]_{A_p}$ is dominated by the maximum to some power.
Of course, $A_p(\R^n)$ is defined similarly as $A_p(\R^n \times \R^m)$ -- just take the supremum over cubes $Q$. For the basic theory
of bi-parameter weights consult e.g. \cite{HPW}.

\subsection{Square functions and maximal functions}
Given $f \colon \R^{n+m} \to \C$ and $g \colon \R^n \to \C$ we denote the dyadic maximal functions
by
$$
M_{\calD^n}g := \sup_{I \in \calD^n} \frac{1_I}{|I|}\int_I |g|\, \textup{ and } \,
M_{\calD} f:= \sup_{R \in \calD}  \frac{1_R}{|R|}\int_R |f|.
$$
We also set $M^1_{\calD^n} f(x_1, x_2) =  M_{\calD^n}(f(\cdot, x_2))(x_1)$. The operator $M^2_{\calD^m}$ is defined similarly.

Define the square functions
$$
S_{\calD} f = \Big( \sum_{R \in \calD}  |\Delta_{R} f|^2 \Big)^{1/2}, \,\, S_{\calD^n}^1 f =  \Big( \sum_{I \in \calD^n}  |\Delta_I^1 f|^2 \Big)^{1/2}\, \textup{ and } \,
S_{\calD^m}^2 f =  \Big( \sum_{J \in \calD^m} |\Delta_J^2 f|^2 \Big)^{1/2}.
$$
Define also
$$
S_{\calD, M}^1 f = \Big( \sum_{I \in \calD^n} \frac{1_I}{|I|} \otimes [M_{\calD^m} \langle f, h_I \rangle_1]^2 \Big)^{1/2}
\, \textup{ and } \, S_{\calD, M}^2 f = \Big( \sum_{J \in \calD^m} [M_{\calD^n} \langle f, h_J \rangle_2]^2 \otimes \frac{1_J}{|J|}\Big)^{1/2}.
$$ 
We record the following standard estimates, which are used repeatedly below in an implicit manner. Some
similar estimates that follow from the ones below are also used.
\begin{lem}\label{lem:standardEst}
For $p \in (1,\infty)$ and $w \in A_p = A_p(\R^n \times \R^m)$ we have the weighted square function estimates
$$
\| f \|_{L^p(w)}
 \sim_{[w]_{A_p}} \| S_{\calD} f\|_{L^p(w)} 
\sim_{[w]_{A_p}}  \| S_{\calD^n}^1 f  \|_{L^p(w)}
\sim_{[w]_{A_p}} \| S_{\calD^m}^2 f  \|_{L^p(w)}.
$$
Moreover, for $p, s \in (1,\infty)$ we have the Fefferman--Stein inequality
$$
\Big\| \Big( \sum_j |M f_j |^s \Big)^{1/s} \Big\|_{L^p(w)} \lesssim_{[w]_{A_p}} \Big\| \Big( \sum_{j} | f_j |^s \Big)^{1/s} \Big\|_{L^p(w)}.
$$
Here $M$ can e.g. be $M_{\calD^n}^1$ or $M_{\calD}$. Finally, we have
$$
\| S_{\calD, M}^1 f\|_{L^p(w)} + \| S_{\calD, M}^2 f\|_{L^p(w)} \lesssim_{[w]_{A_p}} \|f\|_{L^p(w)}.
$$
\end{lem}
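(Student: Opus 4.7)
The plan is to reduce all three parts of the lemma to well-established one-parameter weighted Littlewood--Paley and Fefferman--Stein estimates by slicing in one variable at a time. Two standard facts drive the reduction. First, any bi-parameter weight $w \in A_p(\R^n \times \R^m)$ restricts to $w(\cdot, x_2) \in A_p(\R^n)$ with $[w(\cdot, x_2)]_{A_p(\R^n)} \le [w]_{A_p}$ for a.e.\ $x_2$, and symmetrically; this is exactly the equivalence recorded just above the lemma statement. Second, the classical one-parameter weighted square function equivalence $\|g\|_{L^p(w)} \sim_{[w]_{A_p}} \|S_{\calD^n}g\|_{L^p(w)}$ (Burkholder upper bound, Wilson lower bound) and the one-parameter Fefferman--Stein inequality both come with quantitative $[w]_{A_p}$ dependence and admit $\ell^s$-valued extensions, either by Rubio de Francia extrapolation or by viewing the operators as vector-valued dyadic Calder\'on--Zygmund operators.

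For the chain of square-function equivalences I would first establish $\|f\|_{L^p(w)} \sim \|S_{\calD^n}^1 f\|_{L^p(w)}$, and symmetrically the identity in the second variable, by applying the one-parameter equivalence to the slice $f(\cdot, x_2)$ and integrating in $x_2$ via Fubini. The full bi-parameter square function is then linked to $S_{\calD^n}^1 f$ through the identity $|S_\calD f|^2 = \sum_{I \in \calD^n} |S_{\calD^m}^2(\Delta_I^1 f)|^2$, which is immediate from $\Delta_{I \times J} f = \Delta_J^2\Delta_I^1 f$. Applying the $\ell^2$-valued one-parameter weighted square function estimate in the second variable to the sequence $\{\Delta_I^1 f\}_{I \in \calD^n}$ yields $\|S_\calD f\|_{L^p(w)} \sim \|S_{\calD^n}^1 f\|_{L^p(w)}$, closing the loop.

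The Fefferman--Stein inequality for $M = M_{\calD^n}^1$ is again obtained by slicing in $x_2$ and invoking the one-parameter $\ell^s$-valued weighted inequality. For $M = M_\calD$ I would use the pointwise factorisation $M_\calD f \le M_{\calD^n}^1(M_{\calD^m}^2 f)$ and compose two one-parameter vector-valued Fefferman--Stein bounds, first absorbing $M_{\calD^n}^1$ and then $M_{\calD^m}^2$; the $[w]_{A_p}$ constant propagates because of the slice-wise $A_p$ bound.

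For the last estimate, introduce $F_I(x_1, x_2) := |I|^{-1/2} 1_I(x_1) \langle f, h_I \rangle_1(x_2)$. Since cancellative Haar functions satisfy $|h_I|^2 = 1_I/|I|$, one has $|F_I|^2 = |\Delta_I^1 f|^2$, and since $M_{\calD^m}^2$ acts only in the second variable,
$$
|M_{\calD^m}^2 F_I|^2 = \frac{1_I(x_1)}{|I|}\,[M_{\calD^m}\langle f, h_I\rangle_1]^2(x_2),
$$
which identifies $|S_{\calD, M}^1 f|^2 = \sum_I |M_{\calD^m}^2 F_I|^2$. Applying the $\ell^2$-valued Fefferman--Stein inequality for $M_{\calD^m}^2$ just established then gives $\|S_{\calD, M}^1 f\|_{L^p(w)} \lesssim \|(\sum_I |F_I|^2)^{1/2}\|_{L^p(w)} = \|S_{\calD^n}^1 f\|_{L^p(w)} \sim \|f\|_{L^p(w)}$; the case of $S_{\calD, M}^2 f$ is symmetric. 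No substantive difficulty arises here, as everything is assembled from one-parameter ingredients; the only point requiring care is ensuring each reduction step preserves the quantitative $[w]_{A_p}$ dependence, which is why the slicing lemma and the quantitative vector-valued one-parameter theorems are invoked throughout.
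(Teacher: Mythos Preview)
Your argument is correct. The paper's own proof is a one-line sketch that takes a slightly different organisational route: it first extrapolates everything to $p=2$ (where $L^2$ and Fubini make the iteration of one-parameter results immediate, with no need for vector-valued inputs), and only afterwards returns to general $p$. You instead stay at the given $p$ throughout and iterate the one-parameter theory slice by slice, which forces you to invoke the $\ell^2$- and $\ell^s$-valued one-parameter weighted estimates as black boxes. Both strategies rest on the same two pillars (quantitative one-parameter weighted Littlewood--Paley/Fefferman--Stein, plus the slice-wise control $[w(\cdot,x_2)]_{A_p(\R^n)}\le [w]_{A_p}$), so the difference is purely in the order of operations; the paper's route is shorter to state, while yours makes the mechanism of the iteration more transparent.

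One small point of notation to tidy up: your identity $|F_I|^2=|\Delta_I^1 f|^2$ is literally true only when $n=1$, since for $n\ge 2$ the martingale difference $\Delta_I^1 f=\sum_{\eta\ne 0} h_I^\eta\otimes\langle f,h_I^\eta\rangle_1$ involves several Haar functions and the cross terms do not vanish pointwise. This is harmless: either index the $F$'s by $(I,\eta)$, or simply observe that $\big(\sum_{I,\eta}\frac{1_I}{|I|}|\langle f,h_I^\eta\rangle_1|^2\big)^{1/2}$ is itself a square function equivalent to $\|f\|_{L^p(w)}$ by the same slicing argument. The rest of your bound on $S_{\calD,M}^1$ goes through unchanged.
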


One easy way to show such estimates is to reduce to p = 2 via standard extrapolation. When p = 2 it is especially easy to use one-parameter results iteratively. See e.g. \cite{CMP, CWW} for one-parameter square function results and their history.

\subsection{BMO spaces}\label{ss:bmo}
Let $b \colon \R^{n+m} \to \C$ be locally integrable.

We define the weighted product BMO space. Given $\nu \in A_2(\R^n \times \R^m)$ set
$$
\|b\|_{\BMO_{\textup{prod}}(\nu, \calD)} := 
\sup_{\Omega} \Big( \frac{1}{\nu(\Omega)} \mathop{\sum_{R \in \calD}}_{R \subset \Omega} |\langle b, h_R\rangle|^2 \langle \nu \rangle_{R}^{-1} \Big)^{1/2},
$$
where $h_R := h_I \otimes h_J$ and the supremum is taken over those sets $\Omega \subset \R^{n+m}$ such that $|\Omega| < \infty$ and such that for every $x \in \Omega$ there exist
$R \in \calD$ so that $x \in R \subset \Omega$.
The non-dyadic product BMO space $\BMO_{\textup{prod}}(\nu)$ is defined using the norm defined by the supremum over all dyadic grids of
the above dyadic norms. 

We also say that $b$ belongs to the weighted dyadic little BMO space $\bmo(\nu, \calD)$ if
$$
\|b\|_{\bmo(\nu, \calD)} := \sup_{R \in \calD} \frac{1}{\nu(R)} \int_R |b - \langle b \rangle_R| < \infty.
$$
The non-dyadic variant $\bmo(\nu)$ has the obvious definition.
There holds $\bmo(\nu, \calD) \subset \BMO_{\textup{prod}}(\nu, \calD)$ (this is proved explicitly at least in \cite{LMV2}). We do not need this fact in this paper, however.

For a sequence $(a_I)_{I \in \calD^n}$ we define
$$
\| (a_I) \|_{\BMO(\calD^n)} = \sup_{I_0 \in \calD^n} \Big( \frac{1}{|I_0|} \sum_{I \subset I_0} |a_I|^2 \Big)^{1/2}.
$$

\section{Martingale difference expansions of products}\label{sec:marprod}
Let $\calD^n$ and $\calD^m$ be some fixed dyadic grids in $\R^n$ and $\R^m$, respectively, and write $\calD= \calD^n \times \calD^m$.
In what follows we sum over $I \in \calD^n$ and $J \in \calD^m$. In general, in this paper always $K, I, I_1, I_2 \in \calD^n$ and $V, J, J_1, J_2 \in \calD^m$. 

\subsubsection*{Paraproduct operators}
The product BMO type paraproducts are
\begin{align*}
A_1(b,f) &= \sum_{I, J} \Delta_{I \times J} b \Delta_{I \times J} f, \,\,
A_2(b,f) = \sum_{I, J} \Delta_{I \times J} b E_I^1\Delta_J^2 f, \\
A_3(b,f) &= \sum_{I, J} \Delta_{I \times J} b \Delta_I^1 E_J^2  f, \,\,
A_4(b,f) = \sum_{I, J} \Delta_{I \times J} b \bla f \bra_{I \times J}.
\end{align*}
The little BMO type paraproducts are
\begin{align*}
A_5(b,f) &= \sum_{I, J} E_I^1 \Delta_J^2 b \Delta_{I \times J} f, \,\,
A_6(b,f) = \sum_{I, J}  E_I^1 \Delta_J^2 b  \Delta_I^1 E_J^2  f, \\
A_7(b,f) &= \sum_{I, J} \Delta_I^1 E_J^2  b \Delta_{I \times J} f, \,\,
A_8(b,f) = \sum_{I, J}  \Delta_I^1 E_J^2 b E_I^1 \Delta_J^2 f.
\end{align*}
The  ``illegal'' bi-parameter paraproduct is
$$
W(b,f) = \sum_{I, J}  \bla b \bra_{I \times J} \Delta_{I \times J} f.
$$
Things make sense at least with the a priori assumptions that $b$ is bounded and $f$ is bounded and compactly supported.
For us $b \in \BMO_{\textup{prod}}(\nu)$ so that only the $A_1, \ldots A_4$ are good as standalone operators. 
If we would have $b \in \bmo(\nu) \subset \BMO_{\textup{prod}}(\nu)$, then also $A_5, \ldots, A_8$ would be good (but this is not the case here). So we can only rely on the following boundedness property. See \cite{HPW, LMV2}.
\begin{lem}\label{lem:basicAa}
Suppose $b\in \BMO_{\textup{prod}}(\nu)$, where $\nu=\mu^{1/p}\lambda^{-1/p}$, $\mu, \lambda\in A_p$ and $p \in (1,\infty)$. Then
for $i=1, \ldots, 4$ we have
\begin{align*}
\|A_i(b, \cdot)\|_{L^p(\mu)\rightarrow L^p(\lambda)} \lesssim_{[\mu]_{A_p}, [\lambda]_{A_p}} \|b\|_{\BMO_{\textup{prod}}(\nu)}.
\end{align*}
\end{lem}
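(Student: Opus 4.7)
The approach is to fix the dyadic grid $\calD = \calD^n \times \calD^m$ underlying the paraproducts $A_i$; since $\|b\|_{\BMO_{\textup{prod}}(\nu,\calD)} \le \|b\|_{\BMO_{\textup{prod}}(\nu)}$, it suffices to work with the dyadic norm. By duality, setting $\sigma := \lambda^{1-p'}$, I would bound $|\langle A_i(b,f), g\rangle|$ for nice $f, g$. For each $i$, a direct Haar computation (suppressing $\eta$ summations and treating non-cancellative products $h_R h_R$ by their size $|R|^{-1} 1_R$, as the paper does throughout) rewrites the pairing as
$$
\langle A_i(b,f), g\rangle = \sum_{R \in \calD} \langle b, h_R\rangle \, \tau_R^{(i)}(f,g),
$$
with $\tau_R^{(1)} = \langle f, h_R\rangle \langle g\rangle_R$, $\tau_R^{(4)} = \langle f\rangle_R \langle g, h_R\rangle$, and, for $A_2, A_3$, analogous mixed expressions involving one partial Haar coefficient and one partial average of each of $f$ and $g$; for instance, with $R = I \times J$, $\tau_R^{(2)} = \langle\langle f, h_J\rangle_2\rangle_I \cdot \langle \langle g, h_I\rangle_1\rangle_J$.

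The central tool is a weighted bi-parameter $H^1$--$\BMO_{\textup{prod}}$ duality of Chang--Fefferman type. Starting from the Carleson condition
$$
\sum_{R \subset \Omega} |\langle b, h_R\rangle|^2 \langle \nu\rangle_R^{-1} \le \|b\|_{\BMO_{\textup{prod}}(\nu,\calD)}^2 \,\nu(\Omega)
$$
valid for every admissible open set $\Omega$, I would run a bi-parameter principal-rectangle stopping argument adapted simultaneously to $f$ (against $\mu$) and $g$ (against $\sigma$): define principal rectangles via suitable weighted dyadic maximal functions, decompose $\calD$ by closest principal ancestor $F$, and on each stopping region $\Omega_F$ apply Cauchy--Schwarz so that the BMO side contributes $\|b\|_{\BMO_{\textup{prod}}(\nu)} \nu(\Omega_F)^{1/2}$ and the data side produces a bi-parameter square function of the coefficients $(\tau_R^{(i)})$, integrated against the weight forced by $\nu = \mu^{1/p}\lambda^{-1/p}$.

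Substituting the explicit $\tau_R^{(i)}$, the resulting square/maximal function is recognized as a combination of the operators from Lemma \ref{lem:standardEst} applied to $f$ and $g$ separately --- namely $S_\calD$, $S^i_\calD$, $M^i_\calD$ and the hybrid $S^i_{\calD,M}$. H\"older's inequality on $L^p(\mu) \times L^{p'}(\sigma)$, together with the weighted square function equivalences and the Fefferman--Stein bound of Lemma \ref{lem:standardEst}, closes the estimate with the advertised $[\mu]_{A_p}, [\lambda]_{A_p}$ dependence. I expect the hard part to be step 2: for the little BMO paraproducts $A_5, \ldots, A_8$ a rectangle-by-rectangle Cauchy--Schwarz against the $\bmo(\nu)$ supremum suffices, but the product BMO supremum is over arbitrary open sets $\Omega$, forcing a genuinely bi-parameter Chang--Fefferman stopping decomposition together with careful weight bookkeeping to match the $\langle \nu\rangle_R^{-1}$ factor from the Carleson condition with the Muckenhoupt $A_p$ constants of $\mu$ and $\lambda$ emerging from the data side.
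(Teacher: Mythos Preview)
The paper does not prove this lemma in detail but cites \cite{HPW, LMV2} and later remarks (inside the proof of the main theorem) that the key tool is the weighted $H^1$--$\BMO_{\textup{prod}}$ duality of Proposition~4.1 in \cite{HPW}, after which the square/maximal bounds of Lemma~\ref{lem:standardEst} finish the job; your outline is exactly this route. One refinement of your ``hard step'': that duality is cleanest when proved for an \emph{arbitrary} coefficient sequence $(c_R)$ --- via the Chang--Fefferman principal-rectangle decomposition applied to the level sets of the single square function $\big(\sum_R |c_R|^2 |R|^{-1} 1_R\big)^{1/2}$ against $\nu$ --- and only afterwards specialised to $c_R = \tau_R^{(i)}(f,g)$, rather than by stopping simultaneously on $f$ against $\mu$ and $g$ against $\sigma$, which in the bi-parameter setting lacks the nested structure needed to telescope.
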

We formally have
$$
bf = \sum_{i=1}^8 A_i(b, f) + W(b,f).
$$
When we write like this we say that we decompose $bf$ in the bi-parameter sense.
When we employ this decomposition in practice we have to form suitable differences of the problematic operators with some other terms to get something that is bounded with the product BMO assumption. The operator $W$ has a special role.

Next, we define operators related to one-parameter type decompositions.
Define the one-parameter paraproducts
$$
a^1_1(b,f) = \sum_I \Delta_I^1 b \Delta_I^1 f, \, \,
a^1_2(b,f) = \sum_I \Delta_I^1 b E_I^1 f.
$$
Define also the ``illegal'' one-parameter paraproduct
$$
w^1(b, f) = \sum_I E_I^1 b \Delta_I^1 f.
$$
The operators $a_i^1(b, \cdot)$ would be bounded if $b \in \bmo(\nu) \subset \BMO_{\textup{prod}}(\nu)$, but again this is not the case here.
The operators $a^2_1(b,f)$, $a^2_2(b,f)$ and $w^2(b,f)$ are defined analogously. We formally have
$$
bf = \sum_{i=1}^2 a^1_i(b,f) + w^1(b,f) = \sum_{i=1}^2 a^2_i(b,f) + w^2(b,f).
$$
In this case we say that we decomposed $bf$ in the one-parameter sense (either in $\R^n$ or $\R^m$).

\section{The Bloom type product $\BMO$ upper bound}\label{sec:ProofBloom}

\begin{thm}
Let $T_n$ and $T_m$ be one-parameter CZOs in $\R^n$ and $\R^m$, respectively, and let $p \in (1,\infty)$,
$\mu, \lambda \in A_p$ and $\nu := \mu^{1/p}\lambda^{-1/p}$. We have the quantitative estimate
$$
\| [T_n^1, [b, T_m^2]] \|_{L^p(\mu) \to L^p(\lambda)} \lesssim_{[\mu]_{A_p}, [\lambda]_{A_p}} \|b\|_{\BMO_{\textup{prod}}(\nu)}.
$$
\end{thm}
\begin{rem}
Here we are proving only the right quantitative bound, and prefer to
understand the inequality so that $b$ is nice to begin with -- we at least make the purely qualitative a priori assumption $b \in L^{\infty}(\R^{n+m})$.
\end{rem}
\begin{proof}
It is enough to prove the Bloom type
inequality for $ [U_n^1, [b, U_m^2]]$, where $U_n$ and $U_m$ are DMOs (dyadic model operators) appearing in the representation theorem \cite{Hy, Hy2}.
This means that $U_n \in \{S_n, \pi_n\}$, where $S_n$ is a dyadic shift in $\R^n$ and $\pi_n$ is a dyadic paraproduct in $\R^n$ (we will recall what these mean as we go).
We only have to maintain a polynomial dependence on the complexity of the shift. 

Let $f \colon \R^{n+m} \to \C$ be bounded and compactly supported.
How we expand various products of functions (in the bi-parameter sense, in the one-parameter sense, or not at all) depends on the structure of the model operators:
a cancellative Haar function in a correct position is required to tricker an expansion in the corresponding parameter.
In general, the model operators have the form
$$
U_n^1 f = \sum_{ \substack{K \\  I_i^{(k_i)} = K}} a_{K, (I_i)}  \tilde h_{I_2} \otimes \langle f, \tilde h_{I_1} \rangle_1, \,\,
U_m^2 f = \sum_{ \substack{V \\ J_j^{(v_j)} = V}} a_{V, (J_j)} \langle f, \tilde h_{J_1} \rangle_2 \otimes \tilde h_{J_2},
$$
where $k_1,k_2, v_1, v_2 \ge 0$, $K, I_1, I_2 \in \calD^n$, $V, J_1, J_2 \in \calD^m$, $a_{K, (I_i)}, a_{V, (J_j)}$ are appropriate constants
and $\tilde h_{I_i} \in \{h_{I_i}, 1_{I_i} / |I_i|\}$,  $\tilde h_{J_j} \in \{h_{J_j}, 1_{J_j} / |J_j|\}$. We will write
$$
[U_n^1, [b, U_m^2]]f = I - II - III + IV,
$$
where
$$
I = U_n^1 ( b U_m^2 f), \, II = U_n^1 U_m^2 (bf), \, III = b  U_m^2 U_n^1 f\, \textup{ and } \,IV = U_m^2 ( bU_n^1 f). 
$$
In all of the terms $I, \ldots, IV$ the appearing product $b \cdot U_m^2 f$, $b \cdot f$, $b \cdot U_m^2 U_n^1 f$ or $b \cdot U_n^1 f$, respectively, is expanded. We now explain which of the appearing
Haar functions determine the expansion strategy in each of the terms. For $I$ the determining Haar functions are
$\tilde h_{I_1}, \tilde h_{J_2}$, for $II$ they are $\tilde h_{I_1}, \tilde h_{J_1}$, for $III$ they are $\tilde h_{I_2}, \tilde h_{J_2}$ and for $IV$ they
are $\tilde h_{I_2}, \tilde h_{J_1}$. If both of the determining Haar functions are cancellative, we expand in the bi-parameter sense using the paraproducts $A_i(b, \cdot), W(b, \cdot)$. If one of them is
cancellative we expand in the corresponding parameter $k \in \{1,2\}$ using the paraproducts $a_i^k(b, \cdot), w^k(b, \cdot)$. In addition, when a non-cancellative Haar function appears we sometimes add and subtract an average to ease the upcoming expansions.
\subsection*{The case $[S_n^1, [b, S_m^2]]$}
We study the case that
$$
S_n^1 f = \sum_{ \substack{K \\  I_i^{(k_i)} = K}} a_{K, (I_i)}  h_{I_2} \otimes \langle f, h_{I_1} \rangle_1, \,\,
S_m^2 f = \sum_{ \substack{V \\ J_j^{(v_j)} = V}} a_{V, (J_j)} \langle f, h_{J_1} \rangle_2 \otimes h_{J_2}.
$$
Here $k_1,k_2, v_1, v_2 \ge 0$, $K, I_1, I_2 \in \calD^n$ and $V, J_1, J_2 \in \calD^m$. Only finitely many of the constants $a_{K, (I_i)}$ are non-zero
and
$$
|a_{K, (I_i)}| \le \frac{|I_1|^{1/2} |I_2|^{1/2}}{|K|},
$$
and similarly for $a_{V, (J_j)}$.

We denote
$$
(S_n^1 S_m^2)^{b, 1,1} f := \sum_{ \substack{K \\  I_i^{(k_i)} = K}} \sum_{ \substack{V \\ J_j^{(v_j)} = V}} \langle b \rangle_{I_1 \times J_1}
a_{K, (I_i)} a_{V, (J_j)} \langle f, h_{I_1} \otimes h_{J_1}\rangle h_{I_2} \otimes h_{J_2}.
$$
So here $1,1$ refers to cubes over which we average $b$ over, namely $\langle b \rangle_{I_1 \times J_1}$. Define $(S_n^1 S_m^2)^{b, 1,2}$ etc. analogously. 
We expand the appearing product in the bi-parameter sense in all of the terms
$$
I = S_n^1 ( b S_m^2 f), \, II = S_n^1 S_m^2 (bf), \, III = b  S_m^2 S_n^1 f\, \textup{ and } \,IV = S_m^2 ( bS_n^1 f). 
$$
The term related to the paraproduct $W(b, \cdot)$
e.g. yields $(S_n^1 S_m^2)^{b, 1,2}$ in $I$, and so we get
\begin{align*}
[S_n^1, [b, &S_m^2]]f = I - II - III + IV = E \\
&+ \sum_{i=1}^8 \big[ S_n^1 ( A_i(b, S_m^2 f)) - S_n^1S_m^2 (A_i(b,f)) - A_i(b,S_m^2 S_n^1 f) + S_m^2 ( A_i(b,S_n^1 f)) \big],
\end{align*}
where
$$
E := (S_n^1 S_m^2)^{b, 1,2} - (S_n^1 S_m^2)^{b, 1,1} - (S_n^1 S_m^2)^{b, 2,2} + (S_n^1 S_m^2)^{b, 2,1}.
$$

We start looking at the sum over $i$. For $i \le 4$ even all the individual terms are bounded.
This is because of Lemma \ref{lem:basicAa} and basic weighted bounds of DMOs:
$$
\|S_n^1 f\|_{L^p(w)} \lesssim_{[w]_{A_p}} \|f\|_{L^p(w)}.
$$
For $i \ge 5$ the term $S_n^1 ( A_i(b, S_m^2 f))$ should be paired with one of the terms with a minus. For $i=5,6$ we pair with
$- A_i(b,S_m^2 S_n^1 f)$ and for $i=7,8$ we pair with $-S_n^1S_m^2 (A_i(b,f))$. Let us take $i=5$. Notice that it is enough to study
$S_n^1 ( A_5(b, f))  - A_5(b,S_n^1 f)$, since $S_n^1S_m^2 = S_m^2 S_n^1$ and $S_m^2$ is bounded. 

Now, the term $S_n^1 ( A_5(b, f))  - A_5(b,S_n^1 f)$ equals
$$
\sum_{ \substack{K \\  I_i^{(k_i)} = K}} \sum_J a_{K, (I_i)} \big[ \langle \langle b, h_J\rangle_2 \rangle_{I_1} - \langle \langle b, h_J\rangle_2 \rangle_{I_2} \big]
\langle f, h_{I_1} \otimes h_J\rangle h_{I_2} \otimes h_J h_J.
$$
Dualising and using that for a function $p$ in $\R^n$ we have $\langle p \rangle_{I_1} - \langle p \rangle_{K} = \sum_{k=1}^{k_1} \langle \Delta_{I_1^{(k)}} p \rangle_{I_1}$, we reduce to estimating, for a fixed $k = 1, \ldots, k_1$, as follows:
\begin{align*}
\sum_{ \substack{K \\  I_i^{(k_i)} = K}} &\sum_J |a_{K, (I_i)}| |I_1^{(k)}|^{-1/2}  |\langle b, h_{I_1^{(k)}} \otimes h_J\rangle| |\langle f, h_{I_1} \otimes h_J\rangle|
\langle |\langle g, h_{I_2} \rangle_1|\rangle_J \\
&= \sum_K \sum_{ \substack{I^{(k_1-k)} = K \\ J}} \sum_{ \substack{I_1^{(k)} =  I \\ I_2^{(k_2)} = K}}
 |a_{K, (I_i)}| |I|^{-1/2}  |\langle b, h_I \otimes h_J\rangle| |\langle f, h_{I_1} \otimes h_J\rangle|
\langle |\langle g, h_{I_2} \rangle_1|\rangle_J \\
&\le  \sum_K \sum_{ \substack{I^{(k_1-k)} = K \\ J}}  |\langle b, h_I \otimes h_J\rangle|
|I|^{1/2} |J|^{1/2} \langle |\Delta_{K \times J}^{k_1, 0} f| \rangle_{I \times J} \langle |g| \rangle_{K \times J} \\
&\lesssim_{[\mu]_{A_p}, [\lambda]_{A_p}} \|b\|_{\BMO_{\textup{prod}}(\nu)} \iint_{\R^{n+m}} \Big( \sum_{K,J} [M_{\calD} \Delta_{K \times J}^{k_1,0} f]^2 \Big)^{1/2} M_{\calD} g \cdot \nu \\
&\lesssim_{[\mu]_{A_p}, [\lambda]_{A_p}}  \|b\|_{\BMO_{\textup{prod}}}(\nu) \|f\|_{L^p(\mu)} \|g\|_{L^{p'}(\lambda^{1-p'})}.
\end{align*}
Besides the various standard weighted estimates, we also used the weighted $H^1$-$\BMO_{\textup{prod}}$ duality estimate:
$$
\sum_{I,J} |\langle b, h_I \otimes h_J\rangle| |c_{I,J}| \lesssim_{[\mu]_{A_p}, [\lambda]_{A_p}} \|b\|_{\BMO_{\textup{prod}}(\nu, \calD)} \iint_{\R^{n+m}}
\Big( \sum_{I,J} |c_{I,J}|^2 \frac{1_I \otimes 1_J}{|I| |J|}\Big)^{1/2} \nu.
$$
For this see Proposition 4.1 in \cite{HPW} (this is what is also used to prove Lemma \ref{lem:basicAa}).
Notice also that there was even too much cancellation in this term (we were able to throw away the cancellative Haar function $h_{I_2}$).
We have shown
$$
\|S_n^1 ( A_5(b, f))  - A_5(b,S_n^1 f)\|_{L^p(\lambda)} \lesssim_{[\mu]_{A_p}, [\lambda]_{A_p}}  k_1\|b\|_{\BMO_{\textup{prod}}}(\nu)\|f\|_{L^p(\mu)}.
$$
The other terms are similar, and so we are done with the $i$ summation. 

It remains to study $E := (S_n^1 S_m^2)^{b, 1,2} - (S_n^1 S_m^2)^{b, 1,1} - (S_n^1 S_m^2)^{b, 2,2} + (S_n^1 S_m^2)^{b, 2,1}$.
After noting that
\begin{align*}
\langle b \rangle_{I_1 \times J_2}& - \langle b \rangle_{I_1 \times J_1} - \langle b \rangle_{I_2 \times J_2}  + \langle b \rangle_{I_2 \times J_1}  =
\sum_{k=1}^{k_1} \sum_{v=1}^{v_2} \langle \Delta_{I_1^{(k)} \times J_2^{(v)}} b \rangle_{I_1 \times J_2} \\
&-\sum_{k=1}^{k_1} \sum_{v=1}^{v_1} \langle \Delta_{I_1^{(k)} \times J_1^{(v)}} b \rangle_{I_1 \times J_1}
-\sum_{k=1}^{k_2} \sum_{v=1}^{v_2} \langle \Delta_{I_2^{(k)} \times J_2^{(v)}} b \rangle_{I_2 \times J_2}
+ \sum_{k=1}^{k_2} \sum_{v=1}^{v_1} \langle \Delta_{I_2^{(k)} \times J_1^{(v)}} b \rangle_{I_2 \times J_1}
\end{align*}
we can reduce the term $E$ to terms, which are very similar to what has been estimated above. Finally, we get
\begin{equation*}\label{eq:SScom}
\|[S_n^1, [b, S_m^2]] \|_{L^p(\mu) \to L^p(\lambda)} \lesssim_{[\mu]_{A_p}, [\lambda]_{A_p}} (1+\max k_i)(1+\max v_i) \|b\|_{\BMO_{\textup{prod}}}(\nu).
\end{equation*}

\subsection*{The case $[\pi_n^1, [b, \pi_m^2]]$}
Here we study the case that
$$
\pi_n^1 f = \sum_K a_K h_K \otimes \langle f \rangle_{K,1}, \,\,
\pi_m^2 f = \sum_V a_V \langle f \rangle_{V,2} \otimes h_V.
$$
Here $K \in \calD^n$ and $V \in \calD^m$, $\|(a_K)\|_{\BMO(\calD^n)} \le 1$ and $a_K \ne 0$ for only finitely many $K$, and similarly for $a_V$.
Paraproducts also have the dual form (unlike shifts which are completely symmetric) -- this is taken into account and we comment on it later on.

For $I = \pi_n^1 ( b \pi_m^2 f)$ we perform a one-parameter expansion in $\R^m$ so that
$$
I = \sum_{i=1}^2 \pi_n^1(a_i^2(b, \pi_m^2 f)) + \pi_n^1(w^2(b, \pi_m^2 f)),
$$
where by adding and subtracting an average we get
$$
\pi_n^1(w^2(b, \pi_m^2 f)) = \sum_{K,V} a_K a_V \langle [ \langle b \rangle_{V,2} - \langle b \rangle_{K \times V} ] \langle f \rangle_{V,2} \rangle_K h_K \otimes h_V
+ (\pi_n^1 \pi_m^2)^b f.
$$
Here we have denoted
$
(\pi_n^1 \pi_m^2)^b f = \sum_{K,V} \langle b \rangle_{K \times V} a_K a_V \langle f \rangle_{K \times V} h_K \times h_V.
$
For the term $II = \pi_n^1 \pi_m^2 (bf)$ we only add and subtract an average to the end that
$$
II = \sum_{K, V} a_K a_V \langle [b - \langle b \rangle_{K \times V}] f \rangle_{K \times V} h_K \otimes h_V + (\pi_n^1 \pi_m^2)^b f.
$$
The term $III = b  \pi_m^2 \pi_n^1 f $ warrants a bi-parameter decomposition so that we get
$$
III = \sum_{i=1}^8 A_i(b, \pi_m^2 \pi_n^1  f) + (\pi_n^1 \pi_m^2)^b f.
$$
Finally, the term $IV = \pi_m^2 ( b\pi_n^1 f)$ is treated symmetrically to $I$ so that
$$
IV = \sum_{i=1}^2 \pi_m^2( a_i^1(b, \pi_n^1 f)) + 
\sum_{K,V} a_K a_V \langle [ \langle b \rangle_{K,1} - \langle b \rangle_{K \times V} ] \langle f \rangle_{K,1} \rangle_V h_K \otimes h_V +  (\pi_n^1 \pi_m^2)^b f.
$$

Now, we get
\begin{equation}\label{eq:parSplit}
\begin{split}
&[\pi_n^1, [b, \pi_m^2]]f = I - II - III + IV \\
&= - \sum_{i=1}^4 A_i(b, \pi_n^1 \pi_m^2 f) 
+ \Big\{ \sum_{i=1}^2 \pi_n^1(a_i^2(b, \pi_m^2 f)) + \sum_{i=1}^2 \pi_m^2( a_i^1(b, \pi_n^1 f)) - \sum_{i=5}^8 A_i(b, \pi_m^2 \pi_n^1  f) \Big\} \\
&+ \sum_{K,V} \Big[ a_K a_V \langle [ \langle b \rangle_{V,2} - \langle b \rangle_{K \times V} ] \langle f \rangle_{V,2} \rangle_K h_K \otimes h_V \\
 & + a_K a_V \langle [ \langle b \rangle_{K,1} - \langle b \rangle_{K \times V} ] \langle f \rangle_{K,1} \rangle_V h_K \otimes h_V
 -  a_K a_V \langle [b - \langle b \rangle_{K \times V}] f \rangle_{K \times V} h_K \otimes h_V\Big].
 \end{split}
\end{equation}
The terms with $A_i$, $i=1,\ldots,4$, are readily in control. The terms inside the bracket are paired in the natural way:
$a_1^2$ goes with $A_5$, $a_2^2$ goes with $A_6$, $a_1^1$ goes with $A_7$ and $a_2^1$ goes with $A_8$.
We study
$$
\pi_n^1(a_1^2(b, \pi_m^2 f)) -  A_5(b, \pi_m^2 \pi_n^1  f) = \pi_n^1(a_1^2(b, \pi_m^2 f)) -  A_5(b, \pi_n^1 \pi_m^2 f).
$$
As usual it is enough to study $\pi_n^1(a_1^2(b, f)) -  A_5(b, \pi_n^1 f)$. We see that it equals
$$
\sum_{K,J} a_K  \langle [ \langle b, h_J \rangle_2 - \langle \langle b, h_J \rangle_2 \rangle_K ] \langle f, h_J \rangle_2 \rangle_K h_K \otimes h_J h_J.
$$
Expanding the product inside the average we see that this further equals
$$
\sum_K a_K \sum_{\substack{ I \subset K  \\ J}} \langle h_I h_I \rangle_K \langle b, h_I \otimes h_J\rangle  \langle f, h_I \otimes h_J \rangle
 h_K \otimes h_J h_J.
$$
Dualising this against a function $g$ we are left with estimating as follows:
\begin{align*}
\sum_K \frac{|a_K|}{|K|}& \sum_{\substack{ I \subset K  \\ J}} |\langle b, h_I \otimes h_J\rangle| |\langle f, h_I \otimes h_J\rangle| 
\langle | \langle g, h_K \rangle_1 | \rangle_J \\
&\lesssim_{[\mu]_{A_p}, [\lambda]_{A_p}} \|b\|_{\BMO_{\textup{prod}}(\nu)} \int_{\R^m} \sum_K |a_K|  M_{\calD^m} \langle g, h_K \rangle_1 \langle S_{\calD}f \cdot \nu\rangle_{K,1} \\
&\lesssim \|b\|_{\BMO_{\textup{prod}}(\nu)} \iint_{\R^n \times \R^m} M_{\calD^n}^1 (S_{\calD}f \cdot \nu) S_{\calD, M}^1 g \\
&\lesssim_{[\mu]_{A_p}, [\lambda]_{A_p}} \|b\|_{\BMO_{\textup{prod}}(\nu)}  \|f\|_{L^p(\mu)} \|g\|_{L^{p'}(\lambda^{1-p'})}.
\end{align*}
The three other natural terms coming from this bracket are estimated in a similar way.

Regarding the last term in \eqref{eq:parSplit} we note that after expanding the products it simply equals
$$
\sum_{K,V} a_K a_V \sum_{ \substack{ I \subset K \\ J \subset V}}
 \langle h_I h_I \rangle_K \langle h_J h_J \rangle_V  \langle b, h_I \otimes h_J\rangle  \langle f, h_I \otimes h_J \rangle
 h_K \otimes h_V.
$$
We dualise and estimate as follows:
\begin{align*}
&\sum_{K,V} \frac{|a_K|}{|K|} \frac{|a_V|}{|V|} |\langle g, h_K \otimes h_V\rangle| \sum_{ \substack{ I \subset K \\ J \subset V}}
|\langle b, h_I \otimes h_J\rangle|   |\langle f, h_I \otimes h_J \rangle| \\
&\lesssim_{[\mu]_{A_p}, [\lambda]_{A_p}} \|b\|_{\BMO_{\textup{prod}}(\nu)} \sum_{K,V} |a_K| |a_V| \langle S_{\calD} f \cdot \nu  \rangle_{K \times V} |\langle g, h_K \otimes h_V\rangle| \\
&\lesssim \|b\|_{\BMO_{\textup{prod}}(\nu)}  \iint_{\R^n \times \R^m} \Big( \sum_{K,V} \langle S_{\calD} f \cdot \nu  \rangle_{K \times V}^2  |\langle g, h_K \otimes h_V\rangle|^2
\frac{1_K \otimes 1_V}{|K||V|} \Big)^{1/2} \\
&\le \|b\|_{\BMO_{\textup{prod}}(\nu)}  \iint_{\R^n \times \R^m} M_{\calD}( S_{\calD} f \cdot \nu) S_{\calD} g \lesssim_{[\mu]_{A_p}, [\lambda]_{A_p}} \|b\|_{\BMO_{\textup{prod}}(\nu)} \|f\|_{L^p(\mu)} \|g\|_{L^{p'}(\lambda^{1-p'})}.
\end{align*}
We have shown
\begin{equation*}\label{eq:PiPicom}
\|[\pi_n^1, [b, \pi_m^2]]\|_{L^p(\mu) \to L^p(\lambda)} \lesssim_{[\mu]_{A_p}, [\lambda]_{A_p}} \|b\|_{\BMO_{\textup{prod}}}(\nu).
\end{equation*}

\subsection*{The case $[S_n^1, [b, \pi_m^2]]$}
Here we consider the mixed case that we have a one-parameter shift and a paraproduct like above.
Expanding $I = S_n^1 ( b \pi_m^2 f)$, $II = S_n^1 \pi_m^2 (bf)$, $III = b  \pi_m^2 S_n^1 f$ and $IV = \pi_m^2 ( bS_n^1 f)$ similarly as above we get
\begin{align*}
&[S_n^1, [b, \pi_m^2]] = I - II - III + IV =  \sum_{i=1}^4 S_n^1(A_i(b, \pi_m^2 f)) - \sum_{i=1}^4 A_i(b, \pi_m^2 S_n^1 f) \\
&+ \Big\{ \sum_{i=5}^8 S_n^1(A_i(b, \pi_m^2 f)) + \sum_{i=1}^2 \pi_m^2(a_i^1(b, S_n^1 f)) - \sum_{i=5}^8 A_i(b, \pi_m^2 S_n^1 f) - \sum_{i=1}^2 S_n^1 \pi_m^2(a_i^1(b,f)) \Big\} \\
&+ \sum_V a_V
 \sum_{ \substack{K \\  I_i^{(k_i)} = K}} \sum_{J \subset V}   \langle h_J h_J \rangle_V a_{K, (I_i)} 
[ \langle \langle b, h_J \rangle_2 \rangle_{I_2} - \langle \langle b, h_J \rangle_2 \rangle_{I_1} ]
 \langle f, h_{I_1} \otimes h_J \rangle   h_{I_2} \otimes h_V.
\end{align*}

We handle the last term first. As usual, we reduce to estimating, for a fixed $k = 1, \ldots, k_1$, as follows:
\begin{align*}
&\sum_V \frac{|a_V|}{|V|} \sum_K \sum_{ \substack{ I^{(k_1-k)} = K \\ J \subset V} } |\langle b, h_I \otimes h_J \rangle | |I|^{-1/2}\sum_{ \substack{ I_1^{(k)} = I \\ I_2^{(k_2)} = K} }
|a_{K, (I_i)}| |\langle f, h_{I_1} \otimes h_J \rangle|  |\langle g, h_{I_2} \otimes h_V \rangle| \\
&\le \sum_V |a_V| |V|^{-1/2} \sum_K \sum_{ \substack{ I^{(k_1-k)} = K \\ J \subset V} } |\langle b, h_I \otimes h_J \rangle | |I|^{1/2} |J|^{1/2} 
\langle |\Delta_J^2 f| \rangle_{I \times J} \langle |\Delta_{K \times V}^{k_2, 0} g| \rangle_{K \times V}.
\end{align*}
This is further dominated in the $ \lesssim_{[\mu]_{A_p}, [\lambda]_{A_p}}$ sense by $\|b\|_{\BMO_{\textup{prod}}(\nu)}$ multiplied with
\begin{align*}
\int_{\R^n} \sum_V &|a_V| |V|^{1/2}  \Big\langle \Big( \sum_J [M_{\calD} \Delta_J^2 f]^2 \Big)^{1/2} \nu\Big\rangle_{V,2} \Big( \sum_K  \langle |\Delta_{K \times V}^{k_2, 0} g| \rangle_{K \times V}^2 1_K \Big)^{1/2} \\
&\lesssim \iint_{\R^n \times \R^m} M_{\calD^m}^2 \Big( \Big( \sum_J [M_{\calD} \Delta_J^2 f]^2 \Big)^{1/2} \nu \Big) \Big( \sum_{K, V}[M_{\calD} \Delta_{K \times V}^{k_2, 0} g]^2\Big)^{1/2} \\
&\lesssim_{[\mu]_{A_p}, [\lambda]_{A_p}}  \|f\|_{L^p(\mu)} \|g\|_{L^{p'}(\lambda^{1-p'})}.
\end{align*}

The sums where $i = 1, \ldots, 4$ are obviously bounded. So it remains to bound the terms inside the bracket. This requires appropriately grouping the terms into pairs of differences.
We simply pair $a_1^1$ with $A_7$, $a_2^1$ with $A_8$, 
and then we pair the remaining $A_5$ terms together and the $A_6$ terms together. This easily (after taking operators as common factors as usually)
reduces to terms that we have already bounded. Therefore, we are done:
\begin{equation*}\label{eq:SPicom}
\|[S_n^1, [b, \pi_m^2]]\|_{L^p(\mu) \to L^p(\lambda)} \lesssim_{[\mu]_{A_p}, [\lambda]_{A_p}} (1+\max k_i)\|b\|_{\BMO_{\textup{prod}}(\nu)}.
\end{equation*}

\subsection*{Rest of the cases}
By duality and symmetry the only case we have to still consider is $[\pi_n^1, [b, \pi_m^2]]$,
where this time one of the paraproducts is in the dual form and one is not:
$$
\pi_n^1 f = \sum_K a_K \frac{1_K}{|K|} \otimes \langle f, h_K \rangle_{1}, \,\,
\pi_m^2 f = \sum_V a_V \langle f \rangle_{V,2} \otimes h_V.
$$
When we decompose $[\pi_n^1, [b, \pi_m^2]]$, the usual terms with the $A_i$, $a_i^1$ and $a_i^2$ produce no surprises.
What is left can be written in the form $E_1 + E_2$, where
\begin{align*}
E_1 &= \pi_n^1 (W(b, \pi_m^2 f)) -\pi_n^1 \pi_m^2 (w^1(b,f)) \\
&= - \sum_{K, V} a_K a_V \langle [ \langle b \rangle_{K,1} - \langle b \rangle_{K \times V} ] \langle f, h_K \rangle_1 \rangle_V \frac{1_K}{|K|} \otimes h_V \\
&= - \sum_{K, V} \sum_{J \subset V}  a_K a_V  \langle h_J h_J \rangle_V \langle f, h_K \otimes h_J\rangle \langle \langle b, h_J\rangle_2 \rangle_K \frac{1_K}{|K|} \otimes h_V
\end{align*}
and
\begin{align*}
E_2  &= -w^2(b, \pi_n^1 \pi_m^2 f) + \pi_m^2(b \pi_n^1f)  \\
&= \sum_{K, V} a_K a_V \langle (b-\langle b \rangle_{V,2} )  \langle f, h_K \rangle_1 \rangle_{V,2}  \frac{1_K}{|K|} \otimes h_V \\
&= \sum_{K, V} \sum_{J \subset V}  a_K a_V \langle h_J h_J \rangle_V \langle f, h_K \otimes h_J\rangle \langle b, h_J\rangle_2 \frac{1_K}{|K|} \otimes h_V.
\end{align*}
Therefore, we have
\begin{align*}
E_1 + E_2 &= \sum_{K, V} \sum_{J \subset V}  a_K a_V \langle h_J h_J \rangle_V \langle f, h_K \otimes h_J\rangle [ \langle b, h_J\rangle_2
- \langle \langle b, h_J\rangle_2 \rangle_K] \frac{1_K}{|K|} \otimes h_V \\
&= \sum_{K, V} \sum_{\substack{ I \subset K \\ J \subset V}} a_K a_V \langle h_J h_J \rangle_V |K|^{-1} 
\langle b, h_I \otimes h_J\rangle  \langle f, h_K \otimes h_J\rangle h_I \otimes h_V.
\end{align*}
We now estimate this by duality, and first arrive at the obvious upper bound
\begin{equation}\label{eq:step1}
\|b\|_{\BMO_{\textup{prod}}(\nu)} \sum_{K,V} \frac{|a_K|}{|K|} \frac{|a_V|}{|V|} \iint_{K \times V}  [S_{\calD^n} \langle g, h_V \rangle_2 \otimes S_{\calD^m} \langle f, h_K \rangle_1]\nu.
\end{equation}
Define the auxiliary functions
$$
\varphi_S^1(f)= \sum_{K} h_K \otimes S_{\calD^m}\langle f, h_K\rangle_1 \, \textup{ and } \,
\varphi_S^2(g)= \sum_{V}  S_{\calD^n}\langle g, h_V\rangle_2 \otimes h_V.
$$
Define also $\wt \pi_n^1 f$ to be the same function as $\pi_n^1 f$ except that $a_K$ is replaced by $|a_K|$.
Define $\wt \pi_m^2 g$ via the formula
$$
\wt \pi_m^2 g =  \sum_V |a_V| \langle g, h_V \rangle_{2} \otimes \frac{1_V}{|V|}.
$$
That is, this is not $\pi_m^2$ where $a_V$ is replaced by $|a_V|$, but rather the dual of $\pi_m^2$ where $a_V$ is replaced by $|a_V|$ (using the
definitions of paraproducts that are valid in this subsection).
Notice that \eqref{eq:step1} equals
\begin{align*}
\|b\|_{\BMO_{\textup{prod}}(\nu)} \iint_{\R^n \times \R^m}& \wt \pi_n^1 (\varphi_S^1(f))  \wt \pi_m^2 (\varphi_S^2(g)) \nu \\
&\le \|b\|_{\BMO_{\textup{prod}}(\nu)} \| \wt \pi_n^1 (\varphi_S^1(f)) \|_{L^p(\mu)}\| \wt \pi_m^2 (\varphi_S^2(g)) \|_{L^{p'}(\lambda^{1-p'})} \\
&\lesssim_{[\mu]_{A_p}, [\lambda]_{A_p}} \|b\|_{\BMO_{\textup{prod}}(\nu)} \|f\|_{L^p(\mu)} \|g\|_{L^{p'}(\lambda^{1-p'})}.
\end{align*}
Here we used the weighted boundedness of the paraproducts and the operators $\varphi_S^i$. For the latter, notice e.g. that
$S_{\calD^n}^1 \varphi_S^1 f = S_{\calD}f$.
We are done with the proof.
\end{proof}

\section{Lower bounds for commutators $[b,\cdots [b, [b, T]]\cdots]$}\label{sec:LowerBounds}
Let $K$ be a standard bi-parameter full kernel as in \cite{Ma1}.
We also assume that
$K$ is uniformly non-degenerate. In our setup this means that for all $y \in \R^{n+m}$ and $r_1, r_2 > 0$ there exists $x \in \R^{n+m}$
such that $|x_1-y_1| > r_1$, $|x_2 - y_2| > r_2$ and
\begin{equation}\label{eq:UND}
|K(x, y)| \gtrsim \frac{1}{r_1^{n} r_2^{m}}.
\end{equation}
For example, we can have 
$$
K(x,y) = K_{i,j}(x,y) = \frac{x_{1,i}-y_{1,i}}{ |x_1-y_1|^{n+1} }\frac{x_{2, j}-y_{2,j} }{ |x_2-y_2|^{m+1} }.
$$
That is, $K = K_{i,j}$ is the full kernel of the bi-parameter Riesz transform $R_i^n \otimes R_j^m$, $i = 1, \ldots, n$, $j = 1, \ldots, m$.
Regarding the assumed H\"older conditions of the kernel $K$, similarly as in \cite{Hy5}, a weaker modulus of continuity should be enough, but we do not pursue this.

We record that \eqref{eq:UND} implies the following: given a rectangle $R = I \times J$ there exists a rectangle $\tilde R = \tilde I \times \tilde J$
such that $\ell(I) = \ell(\tilde I)$, $\ell(J) = \ell(\tilde J)$, $d(I, \tilde I) \sim \ell(I)$, $d(J, \tilde J) \sim \ell(J)$ and such that
for some $\sigma \in \C$ with $|\sigma| = 1$ we have for all $x \in \tilde R$ and $y \in R$
that
$$
\Re \sigma K(x,y) \gtrsim \frac{1}{|R|}.
$$
This can be seen as follows. Let, for a big enough constant $A$, the centre $c_{\tilde R}$ of $\tilde R$ to be the point $x$ given by \eqref{eq:UND}
applied to $y = c_R$, $r_1 = A\ell(I)$ and $r_2 = A\ell(J)$. Choose $\sigma$ so that
$\sigma K(c_{\tilde R}, c_R) = |K(c_{\tilde R}, c_R)|$. Finally, use mixed H\"older and size estimates repeatedly.

Let $k \ge 1$ and $b \in L^k_{\textup{loc}}(\R^{n+m})$ be real-valued.
Let $p > 1$ and $\mu, \lambda \in A_p$. Define $\Gamma = \Gamma(K, b, \mu, \lambda, k, p)$ via the formula
$$
\Gamma = \sup \frac{1}{\mu(R)^{1/p}} \Big\| x \mapsto 1_{\tilde R}(x) \int_A (b(x) - b(y))^{k} K(x,y) \ud y\Big\|_{L^{p, \infty}(\lambda)},
$$
where the supremum is taken over all rectangles $R, \tilde R$ with
$\ell(I) = \ell(\tilde I)$, $\ell(J) = \ell(\tilde J)$, $d(I, \tilde I) \sim \ell(I)$ and $d(J, \tilde J) \sim \ell(J)$, and all subsets $A \subset R$.

The moral is simply the following. Notice that
$$
1_{\tilde R}(x) \int_A (b(x) - b(y))^{k} K(x,y) \ud y = 1_{\tilde R}(x)[b,\cdots [b, [b, T]]\cdots](1_A)(x),
$$
if $T$ is a bi-parameter singular integral with a full kernel $K$. If this iterated commutator maps $L^p(\mu) \to L^{p, \infty}(\lambda)$, then $\Gamma$ is dominated by this norm. However, the constant $\Gamma$ is significantly weaker and depends only on the kernel $K$ and some off-diagonal assumptions.

The following proposition supplies the lower bound related to \cite{LMV2} in the case $b_1 = \cdots = b_k = b$. See Equation \eqref{eq:LMVIterated} in the Introduction.
\begin{prop}
Suppose $K$ is a uniformly non-degenerate bi-parameter full kernel, $k \ge 1$ and $b \in L^k_{\textup{loc}}(\R^{n+m})$ is real-valued.
Let $p > 1$, $\mu, \lambda \in A_p$ and $\nu = \mu^{1/p}\lambda^{-1/p}$. Then for 
$\Gamma = \Gamma(K, b, \mu, \lambda, k, p)$ we have
$$
\|b\|_{\bmo(\nu^{1/k})} \lesssim_{[\mu]_{A_p}, [\lambda]_{A_p}} \Gamma^{1/k}.
$$
\end{prop}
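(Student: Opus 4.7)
The approach is the median method as used in the one-parameter setting in \cite{LOR2, Hy5}. Fix a rectangle $R$. By the consequence of uniform non-degeneracy established in the excerpt, choose a partner $\tilde R$ with $\ell(\tilde I) = \ell(I)$, $\ell(\tilde J) = \ell(J)$, $d(I, \tilde I) \sim \ell(I)$, $d(J, \tilde J) \sim \ell(J)$, and some $\sigma \in \C$, $|\sigma| = 1$, such that $\Re \sigma K(x,y) \gtrsim 1/|R|$ for every $x \in \tilde R$ and $y \in R$. The plan is to establish the $\bmo(\nu^{1/k})$ estimate on the class of rectangles of the form $\tilde R$; by the flexibility in the choice of the non-degenerate point (which, e.g.\ for Riesz-type kernels where $|K(x,y)|$ is symmetric, allows every rectangle to be realised as $\tilde R$ for a suitable $R$ of the same side-lengths), this suffices to conclude the bound on all rectangles.

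Let $m$ be a median of $b$ on $R$, so both $E_{\pm} := \{y \in R : \pm (b(y) - m) \geq 0\}$ satisfy $|E_{\pm}| \geq |R|/2$. For each $x \in \tilde R$ set $\epsilon(x) := \sign(b(x) - m) \in \{-1, +1\}$ and $A_x := E_{-\epsilon(x)}$. For $y \in A_x$ the real quantity $\epsilon(x)(b(x) - b(y))$ is nonnegative and at least $|b(x) - m|$; since $b$ is real-valued, taking the $k$-th power preserves this inequality:
\[
\epsilon(x)^k (b(x) - b(y))^k \geq |b(x) - m|^k \geq 0.
\]
Combining with $\Re \sigma K(x,y) \gtrsim 1/|R|$ and $|A_x| \geq |R|/2$ yields
\[
\Big| \int_{A_x} (b(x) - b(y))^k K(x,y) \ud y \Big| \geq \Re \Big[ \sigma \epsilon(x)^k \int_{A_x} (b(x) - b(y))^k K(x,y) \ud y \Big] \gtrsim |b(x) - m|^k
\]
for every $x \in \tilde R$.

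Now partition $\tilde R = F_+ \cup F_-$ with $F_{\pm} := \{x \in \tilde R : \epsilon(x) = \pm 1\}$; on each $F_{\pm}$ the set $A_x$ is constant, equal to $E_{\mp}$. Applying the definition of $\Gamma$ with $A = E_-$ and with $A = E_+$, respectively, produces
\[
\| 1_{F_{\pm}} |b - m|^k \|_{L^{p,\infty}(\lambda)} \lesssim \Gamma \, \mu(R)^{1/p},
\]
and summing the two pieces and rewriting gives
\[
\| 1_{\tilde R} |b - m| \|_{L^{pk,\infty}(\lambda)} \lesssim \Gamma^{1/k} \mu(R)^{1/(pk)}.
\]

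The final step, which I expect to be the main obstacle, is the conversion of this weak-type distributional bound into the $\bmo(\nu^{1/k})$ control
\[
\int_{\tilde R} |b - m| \ud \nu^{1/k} \lesssim \Gamma^{1/k} \nu^{1/k}(\tilde R);
\]
a standard median-to-mean comparison then replaces $m$ by $\langle b \rangle_{\tilde R}$. The plan for this conversion is: apply Kolmogorov's inequality to extract a strong $L^{r}(\lambda)$ bound on $\tilde R$ for some $r < pk$, and then exploit the factorisation $\nu^{1/k} = \mu^{1/(pk)} \lambda^{-1/(pk)}$ via Hölder's inequality combined with the bi-parameter $A_p$ conditions on $\mu$ and $\lambda$ (including doubling $\mu(R) \sim \mu(\tilde R)$ from $A_\infty$) to absorb $\mu(R)^{1/(pk)}$ and the auxiliary $\lambda$-measure factors into $\nu^{1/k}(\tilde R)$. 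The one-parameter analogue is routine, but here it must be adapted to bi-parameter weights so that the quantitative exponent $\Gamma^{1/k}$ is preserved and the implicit constants depend only on $[\mu]_{A_p}$ and $[\lambda]_{A_p}$.
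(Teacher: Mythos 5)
Your proposal follows the median method, like the paper, but organizes it in a way that creates two difficulties the paper deliberately sidesteps.

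The crucial structural difference: you place the median $m$ on $R$ and bound $|b(x)-m|^k$ \emph{pointwise} for $x\in\tilde R$, whereas the paper places the median $\alpha$ on $\tilde R$ and extracts a \emph{constant} lower bound. Concretely, for $x\in\tilde R\cap\{b\ge\alpha\}$ and $y\in R\cap\{b\le\alpha\}$ one has $b(x)-b(y)\ge(\alpha-b(y))_+$, so integrating in $y$ and applying Jensen gives
$$
\Big( \frac{1}{|R|} \int_R (\alpha - b)_+ \Big)^k \lesssim \Re \sigma \int_{R \cap \{b \le \alpha\}} (b(x) - b(y))^{k} K(x,y) \ud y,
$$
with the left-hand side independent of $x$. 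Feeding a constant into the $L^{p,\infty}(\lambda)$ norm over the set $\tilde R\cap\{b\ge\alpha\}$ (which has $\lambda$-measure comparable to $\lambda(R)$ by $A_p$) immediately yields $\int_R(\alpha-b)_+\lesssim \Gamma^{1/k}\nu^{1/k}(R)$ after the elementary $A_p$ manipulation $\mu(R)^{1/p}\lambda(R)^{-1/p}\lesssim\langle\nu^{1/k}\rangle_R^k$; the same for $(b-\alpha)_+$ and a standard median-to-mean comparison finishes. There is no weak-type-to-strong-type conversion at all.

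In your version, the step you flag as ``the main obstacle'' is indeed a genuine gap: you end up with $\|1_{\tilde R}|b-m|\|_{L^{pk,\infty}(\lambda)}\lesssim\Gamma^{1/k}\mu(R)^{1/(pk)}$ and propose Kolmogorov plus H\"older against bi-parameter $A_p$/reverse-H\"older estimates. While such a conversion is plausibly possible, it is not carried out, and the exponent bookkeeping (matching $\nu^{1/k}=\mu^{1/(pk)}\lambda^{-1/(pk)}$ against an $L^r(\lambda)$ bound and the measure $\nu^{1/k}(\tilde R)$) is nontrivial; the onus is on you to make it precise. As written, the proof is incomplete. Moreover, by placing the median on $R$ you are forced to control the oscillation on $\tilde R$, and hence to argue that every rectangle can be realized as a $\tilde R$. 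That needs non-degeneracy of $K$ in the $x$-variable, whereas \eqref{eq:UND} is stated as non-degeneracy in the $y$-variable (given $y$, find $x$). For Riesz-type kernels these coincide by symmetry of $|K|$, as you note, but for a general non-degenerate bi-parameter kernel as hypothesized in the proposition, this direction is not available. Placing the median on $\tilde R$, as the paper does, removes both obstructions simultaneously.
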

\begin{proof}
Let $R = I \times J$ be a fixed rectangle. As we saw above we can find $\tilde R = \tilde I \times \tilde J$
such that $\ell(I) = \ell(\tilde I)$, $\ell(J) = \ell(\tilde J)$, $d(I, \tilde I) \sim \ell(I)$, $d(J, \tilde J) \sim \ell(J)$ and such that
for some $\sigma \in \C$ with $|\sigma| = 1$ we have for all $x \in \tilde R$ and $y \in R$
that
$$
\Re \sigma K(x,y) \gtrsim \frac{1}{|R|}.
$$
For $t \in \R$ let $t_+ = \max(t,0)$. For an arbitrary $\alpha \in \R$ and $x \in \tilde R \cap \{b \ge \alpha\}$ we have
\begin{align*}
\Big( \frac{1}{|R|} \int_R (\alpha - b)_+ \Big)^k &\le \frac{1}{|R|} \int_{R \cap \{b \le \alpha\}} (b(x) - b(y))^{k}
\ud y \\
& \lesssim \Re \sigma \int_{R \cap \{b \le \alpha\}} (b(x) - b(y))^{k} K(x,y) \ud y.
\end{align*}

Now, let $\alpha$ be a median of $b$ on $\tilde R$ so that
$$
\min \{  |\tilde R \cap \{b\le \alpha\}|, |\tilde R \cap \{b\ge \alpha\}|\}\ge \frac{|\tilde R|}{2} = \frac{|R|}{2}.
$$
In particular, it follows that
$$
\frac{ \lambda( \tilde R \cap \{b\ge \alpha\})}{\lambda(\tilde R)} \ge [\lambda]_{A_p}^{-1} \Big( \frac{ |\tilde R \cap \{b\ge \alpha\}|}{|\tilde R|} \Big)^p \gtrsim_{[\lambda]_{A_p}} 1.
$$
As $\lambda(M R) \lesssim_{M, [\lambda]_{A_p}} \lambda(R)$ we have $\lambda(R) \sim_{[\lambda]_{A_p}} \lambda(\tilde R)$. We then get
\begin{align*}
\lambda(R)^{1/p} \Big( \frac{1}{|R|} \int_R (\alpha - b)_+ \Big)^k &\lesssim_{[\lambda]_{A_p}} \lambda( \tilde R \cap \{b\ge \alpha\})^{1/p} \Big( \frac{1}{|R|} \int_R (\alpha - b)_+ \Big)^k \\
&\lesssim \Big\| x \mapsto 1_{\tilde R}(x) \int_{R \cap \{b \le \alpha\}} (b(x) - b(y))^{k} K(x,y) \ud y\Big\|_{L^{p, \infty}(\lambda)} \\
&\le \Gamma \mu(R)^{1/p}.
\end{align*}
We have 
\begin{align*}
1&=\bla \nu^{\frac 1{k(k+1)}\cdot k}\nu^{-\frac 1{k+1}}\bra_R^{k+1} \\ &\le \bla \nu^{1/k}\bra_R^{ k  }\langle \nu^{-1}\rangle_R
\le  \bla \nu^{1/k}\bra_R^{ k  } \langle \lambda \rangle_R^{1/p}\langle \mu^{1-p'}\rangle_R^{1/p'}\le \bla \nu^{1/k}\bra_R^{ k  }[\mu]_{A_p}^{1/p} \langle \mu \rangle_R^{-1/p} \langle \lambda \rangle_R^{1/p}, 
\end{align*}
and so
$$
\mu(R)^{1/p}\lambda(R)^{-1/p}\lesssim_{[\mu]_{A_p}} \bla \nu^{1/k}\bra_R^{k}.
$$
Combining everything we get
$$
 \Big( \frac{1}{|R|} \int_R (\alpha - b)_+ \Big)^k \lesssim_{[\mu]_{A_p}, [\lambda]_{A_p}} \bla \nu^{1/k}\bra_R^{k} \Gamma.
$$
So we have proved $\int_R (\alpha - b)_+ \lesssim_{[\mu]_{A_p}, [\lambda]_{A_p}} \nu^{1/k}(R) \Gamma^{1/k}$, and the bound
$\int_R (b-\alpha)_+ \lesssim_{[\mu]_{A_p}, [\lambda]_{A_p}} \nu^{1/k}(R) \Gamma^{1/k}$ is proved analogously.
The claim $\|b\|_{\bmo(\nu^{1/k})} \lesssim_{[\mu]_{A_p}, [\lambda]_{A_p}} \Gamma^{1/k}$ follows.

\end{proof}

\end{document}